\begin{document}
\begin{frontmatter}

\title{Initial Condition Independent Stabilisability of Switched Affine Systems}

\author{Christopher Townsend$^*$} 
\author{Maria M. Seron$^{**}$} 

\address[First]{Unaffiliated (e-mail: chris.townsend@newcastle.edu.au).}
\address[Second]{School of Engineering, The University of Newcastle, Australia (e-mail: maria.seron@newcastle.edu.au)}

\begin{abstract}
We have previously demonstrated that a switched affine system is stabilisable independently of the initial condition, i.e. there exists an asymptotically stabilising switching function which is the same for all initial conditions, if and only if there exists a stable convex combination of the sub-system matrices. This result was proven by constructing a stabilising switching function of \emph{unbounded} switching frequency. The current paper proves that there exists a switching function with \emph{bounded} switching frequency which stabilises a switched affine system independent of its initial condition. 
\end{abstract}

\begin{keyword}
Switched systems; Initial condition independent stability; Switched periodic systems
\end{keyword}

\end{frontmatter}

\section{Introduction}

A switched system is a time-varying dynamic system consisting of a number of sub-systems and a switching function that specifies which sub-system controls the system dynamics at each time $t$. Switched systems are typically characterised by discontinuous dynamics at each switching instant. We consider linear and affine switched systems. In each case the systems are given by:
\begin{align}
	\label{eq:switchedaffine}
	\dot x = A_i x + b_i, \quad i=1,\dots, m,
\end{align}
for some natural number $m$, where $b_i \in \R ^{n \times 1}$ is non-zero for at least one $i$ if the system is affine and $b_i = 0$ in \Cref{eq:switchedaffine} for all $i$ if the system is linear i.e.
\begin{align}
    \label{eq:linsys}
    \dot x = A_i x
\end{align}

A switched system is \emph{stable} if it is Lyapunov stable \citep{khal92} for any switching function and is \emph{stabilisable} if there exists at least one switching function such that the system is Lyapunov stable. All stable systems are stabilisable. However, the converse is not true. Indeed, for a stabilisable system, the switching function $\sigma(x, t)$ may depend on the initial condition $x(0)$. Here we investigate \emph{when there is a stabilising switching function which is independent of the initial condition}. If such an \emph{Initial Condition Independent} (ICI) switching function exists we say the system is \emph{ICI stabilisable} -- \cite{town20}.

\begin{defn}[ICI Stabilisable]
A switched system is \emph{ICI stabilisable} if there exists a switching function $\sigma(t)$ such that 
\[
	\lim_{t \to \infty} x(t) = 0
\]%
for all initial conditions $x(0)$.
\end{defn}%

i.e. the switched system is stabilised by a time-varying state-independent switching function. A number of necessary conditions for the stability and sufficient conditions for the stabilisability of a switched system are known, see \cite{deae10}, \cite{libe03}, \cite{lin09} and \cite{shor07}. One such condition is the existence of a stable convex combination of the sub-systems i.e. the existence of a matrix $A \in co \l (\mathcal{A} \r)$ -- the convex hull of the set $\l \{ A_1, \cdots, A_m \r \}$ -- such that $Re (\lambda_j) < 0$ for all eigenvalues $\lambda_j$ of $A$. The existence of a stable convex combination is necessary for the stability of a switched linear system \citep{libe03} but is only a sufficient, and conservative, condition for the stabilisability of a switched linear system. Indeed, if there is a stable convex combination then the switched system satisfies the more general Lyapunov-Metzler inequalities \citep{gero06}. Using the $S$-Lemma, \cite{fero96} proved that the existence of a stable convex combination is only necessary and sufficient for the stabilisability of a switched linear system if the system has only two sub-systems. 

In \cite{town20}, it was proven that a switched affine system is ICI stabilisable if and only if there is a stable convex combination, $A \in co (\mathcal{A})$, of the system matrices. The proof relies on constructing a switching function -- the \emph{norm-minimising} switching function. 

\begin{defn}[Norm-minimising]
\label{defn:normmini}
The switching function $\sigma'(x,t)$ is norm-minimising if:
\[
	\sigma' \in \l \{ i : \frac{d}{dt} \l \| x_i \r \| \leq \frac{d}{dt} \l \| x_j \r \| \text{for all } j \neq i \r \}
	\]%
where $x_i$ is the solution to \eqref{eq:linsys} when the $i^{\text{th}}$--sub-system is active.
\end{defn}

-- with unbounded switching frequency which ICI stabilises the switched linear system. As the switching frequency increases the dynamics of the switched system approach those of the \emph{average system} $\dot x =  Ax$. This is as the increase in switching frequency minimises all terms in the Baker-Cambell-Haussdorff (BCH) formula \eqref{eq:bch2} which do not contribute to the convex combination. This switching function is then used to prove ICI stability of the switched affine system. The proposed switching function in \cite{town20} necessarily had unbounded switching frequency. The question of whether there exists a ICI stabilising switching function with a lower bound on the dwell-time was not addressed. The dynamics and stability of switched systems with different switching frequencies has also been addressed by \cite{libe22, libe24} and \cite{yang20}, amongst others.

Here, we address this and extend the results of \cite{town20} by establishing that if a system is ICI stabilisable then there must exist a periodic switching function which ICI stabilises the switched system. The extension is proved by considering the particular dynamics of an ICI stabilisable system under  periodic switching functions. In addition, we show that it is possible to ICI stabilise a switched system with a switching function having bounded switching frequency. The existence of such a function was not established in \cite{town20}.
We also show that, in general, under such a switching function an affine system will converge to a limit set but not to a point.

\section{Dwell time and the BCH formula}
\todo{to do -- do we need non-zero $\alpha_i$?}
We denote by $\alpha_i$ the \emph{normalised activation fraction} of each sub-system i.e.
\[
\alpha_i :=	\lim_{t \to \infty} \l ( \frac{a_i (t)}{t} \r ) \geq 0
\]%
for all $i$, where $a_i (t)$ is the measure of interval $[0,t]$ over which the $i^{\text{th}}$ sub-system is active. The fractions $\alpha_i$ correspond to the coefficients of the matrices $A_i$ and vectors $b_i$ of each sub-system of \eqref{eq:switchedaffine} in the convex combination. We note if $\alpha_i = 0$ then the system \eqref{eq:switchedaffine} is asymptotically the same with or without the $i^{\text{th}}$ sub-system.

Throughout, for a given switching signal $\sigma(t)$ with switching instances $(t_1, t_2, \dots)$ we denote by $\sigma(\eta, t)$ a switching signal with switching instants $(\eta t_1, \eta t_2, \dots)$. Moreover, for each $s_k \in (\eta t_i)$ and corresponding $t_k \in (t_i)$ we have that $\sigma(\eta,s_k) = \sigma(t_k)$ i.e. the order of activation is not affected by the choice of $\eta$.
As an example, if the system \eqref{eq:linsys} switches whenever $t \in \N$ under $\sigma(t)$ then \eqref{eq:linsys} switches whenever $2t \in \N$ under $\sigma(0.5, t)$.

We use the notion of \emph{dwell time} throughout. The dwell time of the $i^{\text{th}}$ sub-system is the length between two switching instants $t_k$ and $t_{k+1}$ over which the sub-system is active. As the switching function is an infinite sequence there are infinitely many dwell times, one for each instant. Unless otherwise specified we consider the average dwell time:
\[
	\mathbb{E} (\tau) :=  \lim_{T \to \infty} \l ( \frac{1}{T}  \r) \sum_{t_k < T} | t_{k} - t_{k-1}|
\]%
By abuse of notation we denote $\tau = \mathbb{E} (\tau)$. We observe that if a subs-system of \eqref{eq:switchedaffine} has dwell time $\tau_i$ under $\sigma(t)$ then it has dwell time $\eta \tau_i$ under $\sigma(\eta, t)$.

The \emph{Baker-Cambell-Haussdorff} formula, see \cite{ster09,town20}, is essential to our results. For two matrices $X, Y \in \R ^{n \times n}$ the Baker-Cambell-Haussdorff formula is a solution to the equation:
	$Z = \ln \l ( \exp(X) \exp(Y) \r )$. 
The solution, $Z$, may be expressed as a infinite sum in terms of the matrices $X, Y$ and terms involving nestings of their commutator $[X, Y]$. For example:
\begin{align*}
Z = X + Y + \frac{1}{2} [X, Y]+ \frac{1}{12} \l ( [X , [X,Y]] + [Y, [Y, X]] \r ) + \cdots
\end{align*}
Throughout we refer to the sum of all commutator-dependent terms as the commutator matrix $C$. Thus we write, $Z$, as:
\begin{align}
\label{eq:bch2}
	Z = X + Y + C
\end{align}%

\section{Periodic Signals}

 We say a switched system is periodic if the switching function $\sigma$ is periodic.

\begin{defn}[Periodic]
	$\sigma$ is periodic if there exists $T > 0$ such that $\sigma(t + nT) = \sigma(t)$ for all $t \in \mathbb{R}_+$ and $n \in \mathbb{N}$.
\end{defn}

 A switching sequence may be periodic with no lower bound on the dwell time $\tau$ of the sequence. Should $\tau \to 0$ then the results of \cite{town20} may be applied directly to such systems. Hence, the unique properties of periodic systems are most useful to study the dynamics of systems with non-zero dwell time i.e. systems for which the average dwell time $\mathbb{E} (\tau) > c >0$ for all $t$. We call such switching functions \emph{non-vanishing}.

\begin{defn}[Non-vanishing]
    We say $\sigma$ is \emph{non-vanishing} if there exists $\tau > 0$ such that for any discontinuity, $t'$, of $\sigma$, $\sigma(t)$ is constant for all $t \in [t',  t' + \tau)$.
\end{defn}

A periodic sequence is a sequence for which the order of activation of each sub-system is fixed and a non-vanishing periodic sequence is a sequence for which there exists $s$ such that $\sigma(t' + s) = \sigma(t')$ for all switching instants $t' > T$. As we consider convergence of the entire state-space we only need to consider the `eventual' properties of each switching function. For example should a switching sequence be aperiodic for all $t < T$ but periodic for all $t \geq T$, for some $T > 0$ then the stability under such a function is equivalent to the stability of the system under the function $\sigma(t)$ for $t \geq T$ i.e. we may assume that the sequence is periodic for all $t$. Throughout we will use the following definitions of the stability of a matrix.

\begin{defn}[Stable]
 A matrix $A$ is \emph{stable} if $Re( \lambda_i) < 0$ for all $\lambda_i \in eig(A)$ and is unstable if there exists $\lambda_i \in eig(A)$ such that $Re(\lambda_i) > 0$. 
\end{defn}

Furthermore, these definitions may be extended to the time-varying matrix $A(t)$. For example, if there exists $T$ such that $A(t)$ is stable for all $t\geq T$. In this case we say the matrix $A(t)$ is \emph{eventually} stable. We note that a -- potentially time-varying -- matrix $A(t)$ is \emph{not unstable} if $Re(\lambda_i) \leq 0$ for all $\lambda_i \in eig(A(t))$.

\Cref{thm:noname} establishes the existence of ICI stabilising periodic switching functions for any system which is ICI stabilisable.

\begin{thm}
\label{thm:noname}
    The following are equivalent:
    \begin{enumerate}
        \item{The system \eqref{eq:linsys} is ICI stabilisable.}
        \item{There exists a stable $A \in co ( \mathcal{A})$.}
        \item{The system \eqref{eq:linsys} is ICI stabilised by the norm-minimising signal.}
        \item{There is a periodic switching signal with non-zero dwell time which ICI stabilises \eqref{eq:linsys}.}
        \item{There is $M > 0$ such that for all dwell-times $\eta < M$ there is a switching signal which ICI stabilises \eqref{eq:linsys}.}
    \end{enumerate}
\end{thm}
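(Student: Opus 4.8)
The plan is to treat the equivalences $(1) \Leftrightarrow (2) \Leftrightarrow (3)$ as already established in \cite{town20} and to close the remaining links by proving $(2) \Rightarrow (4)$ and $(2) \Rightarrow (5)$ together, since the converse implications $(4) \Rightarrow (1)$ and $(5) \Rightarrow (1)$ are immediate: a periodic signal, or any signal obtained from a base signal by scaling its dwell times, is itself a legitimate state-independent switching function $\sigma(t)$, so its very existence witnesses ICI stabilisability. The substance of the argument is therefore the explicit construction of a non-vanishing periodic stabilising signal out of a stable convex combination.

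Starting from a stable $A = \sum_i \alpha_i A_i \in co(\mathcal{A})$ with $\sum_i \alpha_i = 1$, I would fix a base period $T$ and a base signal that, within each period, activates every sub-system $i$ once for a contiguous interval of length $\alpha_i T$ in an arbitrary but fixed order. For the scaled signal $\sigma(\eta, t)$ the period becomes $\eta T$ and the monodromy (one-period state-transition) matrix is $\Phi_\eta = \prod_i \exp(\eta \alpha_i T A_i)$. The key step is the first-order expansion: each factor equals $I + \eta \alpha_i T A_i + O(\eta^2)$, so the product telescopes to
\[
\Phi_\eta = I + \eta T \sum_i \alpha_i A_i + O(\eta^2) = \exp(\eta T A) + O(\eta^2),
\]
where the activation order is irrelevant at first order and every commutator contribution of the matrix $C$ in \eqref{eq:bch2} is absorbed into the $O(\eta^2)$ remainder.

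To pass from this estimate to asymptotic stability I would use a quadratic Lyapunov certificate rather than direct eigenvalue perturbation, which sidesteps the ill-conditioning that Jordan blocks of $A$ could introduce. As $A$ is stable there is $P \succ 0$ with $A^\top P + P A = -Q$ and $Q \succ 0$; substituting the expansion gives
\[
\Phi_\eta^\top P \Phi_\eta = P - \eta T Q + O(\eta^2),
\]
so there is $M > 0$ such that for every $\eta < M$ the correction $-\eta T Q + O(\eta^2)$ is negative definite and hence $\Phi_\eta^\top P \Phi_\eta \prec P$. The monodromy matrix is then Schur, so $x(k \eta T) \to 0$ geometrically for every initial condition, and boundedness of the flow over a single period upgrades this to $x(t) \to 0$. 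Holding for all $\eta < M$, this is exactly $(5)$, while fixing any one such $\eta$ yields a periodic signal with non-zero dwell times $\eta \alpha_i T$, which is $(4)$.

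The main obstacle is the competition between two first-order effects as $\eta \to 0$: the Lyapunov decrease $\eta T Q$ is only first order in $\eta$, while $\exp(\eta T A)$ tends to the identity, so the stability margin of the averaged monodromy closes at the same first-order rate. The construction works precisely because the commutator remainder is genuinely second order, so the true decrease dominates the perturbation once $\eta$ is small enough; making this domination quantitative and uniform -- in particular verifying that the remainder is $O(\eta^2)$ uniformly over the fixed finite collection of exponential factors, and that the resulting $M$ is independent of the initial condition -- is the delicate point, and is exactly where preferring a Lyapunov argument to a bare eigenvalue-continuity argument earns its keep.
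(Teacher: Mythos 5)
Your proposal is correct, and it uses the same core construction as the paper: a periodic signal whose one-period monodromy matrix is the ordered product $\prod_i \exp(\eta\alpha_i A_i)$, expanded via BCH so that the commutator contribution sits in an $O(\eta^2)$ remainder relative to $\exp(\eta A)$. Where you diverge is in how stability of the perturbed monodromy matrix is certified. The paper argues by norm continuity: it picks $\varepsilon \in \bigl(0,\, 1 - \Vert e^{\eta A}\Vert\bigr)$ and concludes $\Vert \Phi(\eta,\eta T)\Vert < 1$; for item (5) it separately bounds $\Vert \Phi(\eta,T) - e^{AT}\Vert$ over a fixed horizon $T$ and lets $\eta \to 0$. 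This is delicate, because $\Vert e^{\eta A}\Vert < 1$ is not automatic for a stable $A$ in an arbitrary norm (transient growth), and the admissible $\varepsilon$-window $1 - \Vert e^{\eta A}\Vert$ itself closes at rate $O(\eta)$ as $\eta \to 0$, which is exactly the first-order competition you flag. Your quadratic Lyapunov certificate $\Phi_\eta^\top P \Phi_\eta = P - \eta T Q + O(\eta^2) \prec P$ resolves both issues at once: it implicitly selects the $P$-weighted norm in which $e^{\eta A}$ is a strict contraction, and it makes explicit that the guaranteed decrease is first order while the commutator perturbation is second order, so a single uniform $M$ works for all $\eta < M$ — giving (4) and (5) simultaneously rather than by two separate estimates. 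The trade-off is that the paper's fixed-horizon bound \eqref{eq:inequp} yields an explicit, monotone-in-$\eta$ error estimate against the average system (reused later in the affine results), whereas your argument yields a cleaner but less quantitative stability conclusion. Both routes correctly reduce (4) and (5) to (2) and both dispatch the converse directions by observing that a periodic signal is in particular a time-only switching function.
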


\begin{proof}
\textbf{$(1 \iff 2 \iff 3)$}. By Theorem 9 of \cite{town20}, the system \eqref{eq:linsys} is ICI stabilisable if and only if it is ICI stabilised by the norm-minimising signal. Furthermore, Theorem 9 of \cite{town20} proves that the norm-minimising signal is ICI stabilising if and only if there exists a stable convex combination $A \in co ( \mathcal{A})$.

\textbf{$(2 \implies 4)$}. Let $(\alpha_1, \cdots, \alpha_m) \in \R ^{m}$ be co-efficients such that $A = \sum_i \alpha_i A_i$ is stable and define $T := \sum_i \alpha_i$. Furthermore, let $\sigma(\eta, t)$ be a periodic switching signal with period $\eta T$ such that if $t \in [0, \eta \alpha_1]$ then  $A_1$ is active, if $t \in (\eta \alpha_1,  \eta \alpha_2]$ then $A_2$ is active and so forth. Hence the state-transition matrix of \eqref{eq:linsys}, under $\sigma$, is
\begin{align*}
		\Phi(\eta, n \eta T) = \l ( e^{\eta \alpha_m A_m} \cdots e^{\eta \alpha_2 A_2} e^{\eta \alpha_1 A_1} \r ) ^n = \l (  \Phi ( \eta, \eta T) \r) ^n 
\end{align*}%
for all $n$. Using \eqref{eq:bch2} iteratively we have $\Phi(\eta, \eta T)$ is equal to
\begin{align}
\label{eq:usethis}
\begin{split}
\exp \l ( \eta \sum_{i \leq m} \alpha_i A_i  + \eta^2 \l ( \prod_{i\leq m} \alpha_i  \r ) C \r )  
              =  e^{ \l ( \eta A + o (\eta^2) C \r )}
              \end{split}
\end{align}

As $\Phi(\eta, t)$ is a continuous function of $\eta$ for any $\varepsilon > 0$ there exists $\eta$ such that
    $\l \|\Phi(\eta, \eta T) - \exp(\eta A) \r \| < \varepsilon$. 
Selecting $\varepsilon \in (0, 1 - \l \| e^{\eta A}\r \|)$ which is non-empty as $\l \| e^{\eta A} \r \| < 1$ we have that
    $\l \| \Phi( \eta, \eta T) \r \| < 1$. 

\textbf{$(2 \implies 5)$}. By the above there exists a periodic signal with state-transition matrix \eqref{eq:usethis} which gives the state of the system at the end point of the interval $[t, t + \eta T]$. Thus $\Phi(\eta, T)$ gives the state of the system at the endpoint of $[t, t + T]$ i.e. the duration elapsed does not depend on $\eta$ and over a duration $T$ the signal $\sigma(\eta, t)$ repeats $T/\eta$ times. Hence, assuming without loss of generality that $t = 0$ and that $T$ is divisible by $\eta$ 
\begin{align}
\l \| \Phi(\eta, T) - e^{ AT} \r \| &= \l \| e^{(\eta A + \eta^2 C)(T/\eta)}  - e^{AT} \r \| \nonumber\\
& = \l \| e^{( AT+ \eta C T) }  - e^{AT} \r \| \nonumber \\
& \leq \l \| \eta C T \r \| e^{ \l \| AT \r \|} e^{\l \| \eta CT\r \|}  \label{eq:inequp}
\end{align}
where the inequality follows as, for matrices $X, Y$, we have the identity   
    $\l \| e^{X + Y} - e^{X} \r \| \leq \l \| Y \r \|  e^{\l \|X \r \|} e^{\l \|Y \r \|}$. 
As $T$ is fixed the $\eta$ dependent terms tend to $0$ monotonically as $\eta$ tends to $0$.
Hence if there exists $\eta_1$ such that the upper bound \eqref{eq:inequp} is equal to some $\varepsilon > 0$ then the state-transition matrix $\Phi(\eta, T)$ must also be within the same $\varepsilon$-neighbourhood of $e^{ AT}$ for all $\eta \leq \eta_1$. Stability follows as $A$ is stable by assumption.

We note that $5$ and $4 \implies 1$ by definition. \hfill $\blacksquare$
\end{proof}

\subsection{An alternative proof of \Cref{thm:noname}}

Lemmas 2.10 and 2.11 from \cite{sun06} may also be used to prove the existence of a ICI stabilising switching signal with sufficiently small dwell time for \eqref{eq:linsys}. This is because Lemma 2.10 of \cite{sun06}, proves that systems with sufficiently small average dwell time, $\tau$, may be approximated by a periodic system within $o (\tau^2)$.  For a linear periodic system of the form \eqref{eq:linsys}, \cite{sun06} define the \emph{average system} as the time-invariant system:
\begin{align}
	\label{eq:averagesys}
	\dot x = A x
\end{align}%
where $A := \sum _i \alpha_i A_i$, with $\alpha_i$ being the fraction of $T$ for which the $i^{\text{th}}$ sub-system is active. The state-transition matrix of \eqref{eq:averagesys} is
	$\Psi(t,0) := \exp(A t)$. 

\begin{lem}[Lemma 2.11. \cite{sun06}]
	\label{lem:sun06}
	Suppose that the average system satisfies:
		$\Vert \Psi (t, t_0 ) \Vert \leq \beta e^{\delta (t - t_0)}$ 
		for all $t \geq t_0$ and for some $\beta >0$ and $\delta \in \R$. Then, for any $\varepsilon > 0$ there exist positive real numbers $\kappa$ and $\rho$ such that:
			$\Vert \Phi (t, t_0) \Vert \leq \kappa e^{(\delta + \varepsilon)(t - t_0)}$ 
		for all $t \geq t_0$ with $T \leq \rho$, where $\Phi(t, t_0)$ is the state-transition matrix for \eqref{eq:linsys} under a periodic signal, with period $T$.
\end{lem}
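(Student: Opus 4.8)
The plan is to reduce the periodic system to a small perturbation of the average system by passing to the one-period transition map and iterating. The key observation is that, because the ordered product defining the one-period map is itself a single matrix exponential, taking its $N$-th power introduces no accumulation of per-period error: the entire $N$-period evolution is governed by a single \emph{effective generator} that differs from $A$ only by a term of order $T$.

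First I would compute the one-period transition matrix. With the $i$-th sub-system active for a fraction $\alpha_i$ of the period $T$ (so $\sum_i \alpha_i = 1$), we have
\[
	P := \Phi(t_0 + T, t_0) = e^{\alpha_m T A_m} \cdots e^{\alpha_1 T A_1}.
\]
Expanding each factor as $e^{\alpha_i T A_i} = I + \alpha_i T A_i + O(T^2)$ and collecting terms gives $P = I + TA + O(T^2)$, so for $T$ small enough that $P$ lies in the domain of the matrix logarithm we may write, via \eqref{eq:bch2},
\[
	M := \log P = T A + T^2 C(T),
\]
where $C(T)$ is the commutator matrix and $\|C(T)\| \leq K$ uniformly for all $T \leq \rho_0$, for some constants $\rho_0, K > 0$.

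Next I would exploit the identity $P^N = \exp(NM)$. Writing $NM = (NT)\left(A + T\,C(T)\right)$ exhibits the $N$-period map as the exponential of the perturbed generator $A + T C(T)$ acting over the total elapsed time $NT$. The main step --- and the principal obstacle --- is then a bounded-perturbation estimate: from the hypothesis $\|\Psi(t,t_0)\| = \|e^{A(t-t_0)}\| \leq \beta e^{\delta(t-t_0)}$, the Duhamel formula together with Gronwall's inequality yields $\left\| e^{(A + E)t} \right\| \leq \beta\, e^{(\delta + \beta\|E\|)\,t}$ for any perturbation $E$ and any $t \geq 0$. Applying this with $E = T C(T)$, so that $\beta\|E\| \leq \beta K T$, and choosing $\rho := \min\{\rho_0,\ \varepsilon/(\beta K)\}$ guarantees $\beta\|E\| \leq \varepsilon$ whenever $T \leq \rho$, whence $\|P^N\| \leq \beta\, e^{(\delta+\varepsilon)(NT)}$.

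Finally I would assemble the bound for arbitrary $t \geq t_0$. Writing $t - t_0 = NT + s$ with $0 \leq s < T \leq \rho$, the partial-period factor $\Phi(t, t_0 + NT)$ is an ordered product of exponentials over total duration $s$ and is bounded by the constant $\kappa_1 := e^{\rho \max_i \|A_i\|}$ independently of $t$. Since $NT \leq t - t_0$ and the gap $s$ is bounded, the factorisation $\Phi(t,t_0) = \Phi(t, t_0 + NT)\,P^N$ together with absorption of the finitely many $t$-independent factors into a single constant $\kappa$ yields $\|\Phi(t,t_0)\| \leq \kappa\, e^{(\delta+\varepsilon)(t-t_0)}$, as required. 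The delicate point throughout is that the $O(T^2)$ per-period error must not be allowed to accumulate additively across the $N \approx (t-t_0)/T$ periods; this is exactly what the single-exponential representation $P^N = \exp(NM)$ prevents, by relegating the error to a rate perturbation of size $O(T)$ in the exponent rather than to a multiplicative constant that grows with $t$.
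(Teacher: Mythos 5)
The paper does not actually prove this statement: it is imported verbatim as Lemma 2.11 of \cite{sun06} and used as a black box in the alternative proof that $(2)\implies(4)$ of \Cref{thm:noname}, so there is no in-paper argument to compare yours against. Your blind proof is, however, a correct and self-contained derivation, and the route is the natural one. The two load-bearing steps both hold: (i) for $T$ small the one-period map $P$ lies in the domain of the principal matrix logarithm and $\log P = TA + T^2C(T)$ with $\|C(T)\|$ uniformly bounded for $T\le\rho_0$ --- this follows from analyticity of $T\mapsto P(T)$ with $P(0)=I$ and $P'(0)=A$, so the BCH remainder genuinely is $O(T^2)$ with a uniform constant, a point worth one explicit sentence since the paper elsewhere treats $C$ somewhat informally; and (ii) the Duhamel--Gronwall estimate $\|e^{(A+E)t}\|\le\beta e^{(\delta+\beta\|E\|)t}$ is exactly the right device for converting the hypothesis on $\Psi$ into a bound on $P^N=e^{NM}$, and your identification of the crux --- that the single-exponential representation relegates the per-period $O(T^2)$ error to an $O(T)$ rate perturbation instead of a constant accumulating over $N\approx(t-t_0)/T$ periods --- is precisely why the naive ``multiply $N$ per-period bounds'' approach fails and this one succeeds. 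Two minor points to tighten: your product formula for $P$ assumes each sub-system is active in a single block per period, whereas a general periodic signal may activate a sub-system on several sub-intervals; the product then has more factors but $P=I+TA+O(T^2)$ is unchanged, so the argument survives. And in the final absorption step you should note explicitly that $e^{(\delta+\varepsilon)NT}=e^{(\delta+\varepsilon)(t-t_0)}e^{-(\delta+\varepsilon)s}$ with $0\le s<\rho$, so the discarded factor is bounded by $e^{|\delta+\varepsilon|\rho}$ and can legitimately be folded into $\kappa$.
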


Hence we give the following alternative proof that $(2) \implies (4)$ from \Cref{thm:noname}.

\begin{proof}
By Theorem 9 of \cite{town20}, there must exist a stable convex combination $A \in \mathcal{A}$. Let $\delta < 0$ be its rate of convergence. By \Cref{lem:sun06} for any $\varepsilon > 0$ there must exist $\rho$ such that the periodic system satisfies:
	$\Vert \Phi(t) \Vert \leq \kappa e ^{(\delta + \varepsilon) t}$ 
for all $T \leq \rho$. Choosing $\varepsilon < - \delta$ gives that the periodic system is ICI stable.\hfill $\blacksquare$
\end{proof}

\subsection{Shift Invariance and Commutativity}

\def\ss{ICI stabilising}

As a signal is only ICI stabilising if it is eventually ICI stabilising, it suffices to only consider the behaviour of the system for all $t$ greater than a given finite $T$. As such any ICI stabilising signal remains ICI stabilising for any initialisation time i.e. it is shift-invariant.

\begin{lem}
\label{lem:firsttry}
 	Suppose there exists a periodic ICI stabilising switching function $\sigma$ for \Cref{eq:linsys}. Then the shifted switching function $\sigma(t +\gamma)$ ICI stabilises the switched linear system for all $\gamma \geq 0$.
\end{lem}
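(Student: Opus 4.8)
The plan is to exploit the invertibility of the state-transition matrix of a linear switched system, which lets the initial segment of length $\gamma$ be factored out without destroying convergence. First I would recall that since \eqref{eq:linsys} is linear, its solution under $\sigma$ is $x(t) = \Phi(t,0)x(0)$, where $\Phi$ is the state-transition matrix. ICI stabilisation is then equivalent to $\Phi(t,0) \to 0$ as $t \to \infty$: indeed, applying the hypothesis $\lim_{t\to\infty}x(t)=0$ to each standard basis vector $x(0)=e_j$ forces every column of $\Phi(t,0)$, and hence $\Phi(t,0)$ itself, to vanish in the limit.

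Next I would identify the state-transition matrix of the shifted signal. Writing $\tilde\sigma(t) = \sigma(t+\gamma)$ and performing the change of variable $u = \tau + \gamma$ in the defining differential equation shows that the shifted trajectory satisfies $\tilde x(t) = \Phi(t+\gamma, \gamma)\tilde x(0)$; that is, the shifted transition matrix is $\tilde\Phi(t,0) = \Phi(t+\gamma,\gamma)$. In words, running the shifted system over $[0,t]$ coincides with running the original system over $[\gamma, t+\gamma]$.

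The crux is then to relate $\Phi(t+\gamma,\gamma)$ back to $\Phi(\cdot,0)$. Using the cocycle identity $\Phi(t+\gamma,0) = \Phi(t+\gamma,\gamma)\,\Phi(\gamma,0)$ together with the fact that each subsystem flow $e^{A_i s}$ — and hence any finite product of such exponentials, in particular $\Phi(\gamma,0)$ — is invertible, I would write $\tilde\Phi(t,0) = \Phi(t+\gamma,0)\,\Phi(\gamma,0)^{-1}$. Since $\Phi(\gamma,0)^{-1}$ is a fixed finite matrix and $\Phi(t+\gamma,0)\to 0$ as $t\to\infty$ by the first step, it follows that $\tilde\Phi(t,0)\to 0$, so $\lim_{t\to\infty}\tilde x(t) = 0$ for every initial condition and $\tilde\sigma$ is ICI stabilising.

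The step I expect to require the most care is verifying the correspondence $\tilde\Phi(t,0)=\Phi(t+\gamma,\gamma)$ precisely across the switching instants, since $\sigma$ is discontinuous; but because the solution is absolutely continuous and the change of variable is affine, the identity persists through switches. I would also remark that the argument never actually uses periodicity of $\sigma$ — only invertibility of the transition matrix of the linear system — so the conclusion in fact holds for \emph{any} ICI stabilising signal, periodic or not.
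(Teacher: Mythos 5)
Your argument is correct and is essentially the paper's own proof: the identity $\tilde\Phi(t,0)=\Phi(t+\gamma,0)\,\Phi(\gamma,0)^{-1}$ is exactly the paper's $G(t)=P(t+\gamma)F(\gamma)$ with $F(\gamma)=P^{-1}(\gamma)$ bounded, and convergence follows the same way. Your closing remark that periodicity is never used also matches the paper's own comment preceding the lemma that any ICI stabilising signal is shift-invariant.
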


\begin{proof}
Let $P(t) := \Phi(t, \sigma(t))$ and $G(t):=\Phi(t, \sigma(t+ \gamma))$ be the state-transition matrices over the interval $[0, t]$ of \eqref{eq:linsys} under the unshifted switching signal $\sigma(t)$ and the shifted switching signal $\sigma(t + \gamma)$ respectively.
Define the matices $F(\gamma) := P^{-1} (\gamma)$ and $H(t, \gamma) := P(t + \gamma) P^{-1} (t)$ which, respectively, are the inverse of the state-transition matrix $P(t)$ over the interval $[0, \gamma]$ and the state transition matrix $\Phi(t, \sigma(t))$ over the interval $[t, t+ \gamma]$. Then $G(t) = P(t + \gamma) F(\gamma) = H(t + \gamma) P(t) F( \gamma)$ for all $\gamma$ Furthermore, for given $\gamma \geq 0$, $\l \| F(\gamma)\r \|$ and $\l \| H(t, \gamma) \r \|$, are bounded for all $t \geq \gamma$. Thus, $G(t) = P(t) + o(t)$, for all $t \geq \gamma$.\hfill $\blacksquare$
\end{proof}

Under a periodic signal the system \eqref{eq:linsys} is stable if and only if $ | det \l ( \Phi(\eta, T) \r )| < 1$. As $det(XY) = det(YX) = det(X) det(Y)$, stability is maintained under any activation order of the sub-systems so long as each sub-system is activated for the same duration. We formalise this in \Cref{lem:det}.

\begin{lem}
\label{lem:det}
    The system \eqref{eq:linsys} is ICI stabilised by the periodic switching signal $\sigma(\eta, t)$ if and only if it is stabilised by any permutation $\sigma'(\eta, t)$ of the signal.
\end{lem}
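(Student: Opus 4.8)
The plan is to reduce the statement to the permutation-invariance of a single determinant, leaning on the stability criterion recorded immediately before the lemma. First I would write the one-period state-transition matrix of \eqref{eq:linsys} under $\sigma(\eta,t)$ as the ordered product $\Phi(\eta, \eta T) = M_m \cdots M_2 M_1$, where $M_i := e^{\eta \alpha_i A_i}$ is the factor contributed by the activation of the $i^{\text{th}}$ sub-system, exactly as in the proof that $(2)\implies(4)$. A permutation $\sigma'(\eta,t)$ of the signal leaves each sub-system active for the same total duration within one period and merely reorders the activation intervals, so its one-period transition matrix is $\Phi'(\eta, \eta T) = M_{\pi(m)} \cdots M_{\pi(1)}$ for some permutation $\pi$ of $\{1,\dots,m\}$: the factors are the very same matrices, only rearranged.

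The key step is then immediate. Since the determinant of a product is the product of the determinants, and multiplication of scalars commutes,
\[
	\det \left( \Phi'(\eta, \eta T) \right) = \prod_{i=1}^m \det\left(M_{\pi(i)}\right) = \prod_{i=1}^m \det(M_i) = \det \left( \Phi(\eta, \eta T) \right),
\]
so that $\left| \det \Phi' \right| = \left| \det \Phi \right|$ for every $\pi$. Invoking the stated criterion that a periodic system is ICI stable if and only if the modulus of the determinant of its one-period transition matrix is strictly less than one, the system is stabilised by $\sigma(\eta,t)$ if and only if it is stabilised by $\sigma'(\eta,t)$. Because the identity is a permutation and the inverse of a permutation is again a permutation, this single symmetric equality delivers both directions of the equivalence at once. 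One could equally substitute $\det(M_i) = e^{\eta \alpha_i \mathrm{tr}(A_i)}$ and read off the symmetry of the product in the $M_i$ directly.

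The main obstacle is conceptual rather than computational: matrix multiplication is non-commutative, so reordering the factors genuinely changes the transition matrix, and a naive eigenvalue argument would fail because the spectral radius of $\Phi$ is \emph{not} invariant under arbitrary permutations of the factors. It is preserved only under cyclic permutations, for which $\Phi' = M_1 \Phi M_1^{-1}$ is a true similarity and hence shares the eigenvalues of $\Phi$. The whole weight of the lemma therefore rests on the determinant criterion, which is exactly what carries the non-cyclic permutations past this difficulty; I would make sure to cite it explicitly, using the cyclic similarity above only as an independent consistency check on the cyclic case.
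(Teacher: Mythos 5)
Your proof is correct and takes essentially the same route as the paper, which disposes of the lemma in one line by appealing to the multiplicativity and commutativity of the determinant; you have simply made the factorisation $\Phi(\eta,\eta T)=M_m\cdots M_1$ and the role of the permutation $\pi$ explicit, and your remark that only cyclic permutations preserve the spectrum matches the paper's own comment following the lemma. Note that both your argument and the paper's rest entirely on the stated criterion that periodic stability is equivalent to $|\det(\Phi(\eta,T))|<1$; since the determinant only controls the product of the eigenvalue moduli and not the spectral radius, the whole weight of the lemma hangs on that criterion rather than on the permutation step, exactly as you observe.
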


\begin{proof}
This follows as the determinant of the product of matrices is commutative.\hfill $\blacksquare$
\end{proof}

We note that the individual eigenvalues and eigenvectors are not preserved under the permutation -- unless the permutation is a cycle -- of a switching signal $\sigma$ but only their product. So $\lambda_i \in eig(\Phi(\sigma, t))$ does not imply that $\lambda_i \in eig(\Phi( g \circ \sigma, t) )$ for a permutation $g \in \mathbb{Z}_m$ where $\Phi(\sigma, t)$ is the state-transition matrix of \eqref{eq:linsys} under $\sigma$. The fact that the eigenvectors are not preserved under permutation follows as $v$ is an eigenvector of the product $AB$ i.e. $AB v = \lambda v$ if and only if $Bv$ is an eigenvector of $BA$ with eigenvalue $\lambda$.

In \Cref{lem:itdoesntexistyet} we give a conservative implicit upper bound for $\eta$, in terms of the commutator matrix $C$, which ensures that the periodic non-vanishing switching signal $\sigma(\eta', t)$ is ICI stabilising for all $\eta' \leq \eta$, provided that $\sum_i \alpha_i A_i$ is stable for the  normalised activation times $\alpha_i$ of~$\sigma$.

In the proof of \Cref{lem:itdoesntexistyet}, we use the \emph{Lie Product} formula, \eqref{eq:lieprod}, \citep{tauv05} i.e. for matrices $M_i$
\begin{align}
	\label{eq:lieprod}
    \exp \l ( \sum M_i \r ) = \lim_{k \to \infty} \l ( \prod \exp \l ( \frac{1}{k}M_i \r ) \r ) ^k
\end{align}

\begin{lem}
\label{lem:itdoesntexistyet}
Suppose $A \in co \l ( \mathcal{A} \r )$ is stable. Then the periodic non-vanishing switching signal $\sigma(\eta, t)$ ICI stabilises \eqref{eq:linsys} if, for all $k$,
\begin{align}
\label{eq:uselessineq}
    \l \| \exp \l ( \frac{\eta ^2}{k} \l( \prod_i \alpha_i \r)  C\r )\r \| < \l \| \exp \l ( \frac{\eta}{k} \sum_i \alpha_i A_i \r )\r \|^{-1}
\end{align}%
\end{lem}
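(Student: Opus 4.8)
The plan is to reduce ICI stabilisation under the periodic signal to contractivity of the one-period map and then to estimate that map with the Lie Product formula. Under $\sigma(\eta,t)$ the state over $n$ periods factors as $\Phi(\eta,n\eta T)=\Phi(\eta,\eta T)^{n}$, and since the state stays bounded within each period it suffices to control the single matrix $\Phi(\eta,\eta T)$: if $\Phi(\eta,\eta T)^{n}\to 0$ then $\lim_{t\to\infty}x(t)=0$, and by \Cref{lem:firsttry} this decay is inherited for every initialisation time. Using \eqref{eq:usethis} I would write the one-period transition matrix as $\Phi(\eta,\eta T)=\exp(X+Y)$ with $X:=\eta\sum_i\alpha_i A_i=\eta A$ and $Y:=\eta^{2}\bigl(\prod_i\alpha_i\bigr)C$, so the problem becomes bounding $\|\exp(X+Y)\|$.

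Next I would apply the Lie Product formula \eqref{eq:lieprod} to the two summands $X$ and $Y$, writing $\exp(X+Y)=\lim_{k\to\infty}\bigl(\exp(X/k)\exp(Y/k)\bigr)^{k}$. Because the operator norm is continuous it passes through the limit, and submultiplicativity gives, for each $k$, $\bigl\|(\exp(X/k)\exp(Y/k))^{k}\bigr\|\le\bigl(\|\exp(X/k)\|\,\|\exp(Y/k)\|\bigr)^{k}$. The hypothesis \eqref{eq:uselessineq} is exactly the statement that $\|\exp(Y/k)\|<\|\exp(X/k)\|^{-1}$, i.e. $\|\exp(X/k)\|\,\|\exp(Y/k)\|<1$ for every $k$, so each term of this bounding sequence is strictly below $1$. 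Passing to the limit then yields $\|\Phi(\eta,\eta T)\|\le 1$.

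The main obstacle I expect is upgrading this non-strict bound to the strict contractivity that asymptotic convergence demands, since a sequence of numbers each strictly below $1$ may still converge to $1$. Writing $a_k:=\|\exp(X/k)\|$ and $b_k:=\|\exp(Y/k)\|$, the bounding sequence satisfies $(a_k b_k)^{k}\to\exp\bigl(\mu(X)+\mu(Y)\bigr)$, where $\mu(\cdot)$ denotes the matrix measure, and \eqref{eq:uselessineq} only forces $\mu(X)+\mu(Y)\le 0$. To close the gap I would invoke the standing assumption that $A$ is stable, passing from the norm to the spectral radius: since $\rho(\exp(X+Y))=\exp\bigl(\max_j\mathrm{Re}\,\lambda_j(X+Y)\bigr)$ and $X+Y=\eta A+\eta^{2}\bigl(\prod_i\alpha_i\bigr)C$ is a fixed perturbation of the stable matrix $\eta A$, the aim is to show the admissible range of $\eta$ keeps the spectral abscissa of $X+Y$ strictly negative, so that $\rho(\Phi(\eta,\eta T))<1$ and $\Phi(\eta,\eta T)^{n}\to 0$ even where the norm estimate degenerates to $\le 1$. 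This is the step requiring the most care, as it is precisely where the stability of $A$, rather than the purely algebraic inequality \eqref{eq:uselessineq} alone, must be brought to bear.
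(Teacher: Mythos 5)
Your argument follows the paper's proof step for step: reduce to the one-period transition matrix $\Phi(\eta,\eta T)=\exp\bigl(\eta\sum_i\alpha_i A_i+\eta^2\bigl(\prod_i\alpha_i\bigr)C\bigr)$, expand it via the Lie Product formula \eqref{eq:lieprod}, and use submultiplicativity of the norm together with \eqref{eq:uselessineq} to bound each term of the approximating sequence strictly below $1$. The one place you go beyond the paper is your final paragraph, and that is to your credit rather than a defect: the paper simply asserts that the limit ``is less than $1$ by assumption,'' whereas, as you correctly observe, a limit of quantities each strictly below $1$ is only guaranteed to be $\leq 1$ --- indeed $\bigl(\|e^{X/k}\|\,\|e^{Y/k}\|\bigr)^k\to e^{\mu(X)+\mu(Y)}$ with $\mu$ the logarithmic norm, and \eqref{eq:uselessineq} only forces $\mu(X)+\mu(Y)\leq 0$, so the strictness of the final bound is a genuine gap in the published argument, not merely in yours. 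Your proposed repair --- pass from the norm to the spectral radius and use the standing stability of $A$ to keep the spectral abscissa of $\eta A+\eta^2\bigl(\prod_i\alpha_i\bigr)C$ strictly negative --- is the natural one, but be aware that it only succeeds for sufficiently small $\eta$ (which is consistent with the remark following the lemma that such $\eta$ always exists); it does not by itself establish the conclusion for \emph{every} $\eta$ satisfying \eqref{eq:uselessineq}, which is what the statement literally claims, so if you carry the patch out you should either add the smallness restriction on $\eta$ or show that \eqref{eq:uselessineq} itself already confines $\eta$ to that range.
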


\begin{proof}
The state-transition matrix under $\sigma(\eta, t)$, after one period $T$, is given by
\[
    \Phi(\eta, T) = \exp \l ( \eta \sum_{i = 1} ^m \alpha_i A_i + \eta^2 \prod_{i=1} ^m \alpha_i C \r) 
\]%
Hence
\begin{align*}
    \l \| \Phi (\eta, T) \r \| &= \lim_{k \to \infty} \l \|  \l ( e^{ \l ( \frac{\eta}{k} \sum_{i = 1} ^m \alpha_i A_i \r)}     e^ {\l (  \frac{\eta^2}{k} \prod_{i=1} ^m \alpha_i C \r) }\r ) ^k \r \|\\
                               &\leq \lim_{k \to \infty} \l \|  \l ( e^{ \l ( \frac{\eta}{k} \sum_{i = 1} ^m \alpha_i A_i \r)}     e^ {\l (  \frac{\eta^2}{k} \prod_{i=1} ^m \alpha_i C \r) }\r ) \r \|^k\\
                               &\leq \lim_{k \to \infty} \l \|   e^{ \l ( \frac{\eta}{k} \sum_{i = 1} ^m \alpha_i A_i \r)} \r \|^k    \l \| e^ {\l (  \frac{\eta^2}{k} \prod_{i=1} ^m \alpha_i C \r) } \r \|^k
\end{align*}
which is less than $1$ by assumption. Hence, the system is ICI stabilised by $\sigma(\eta, t)$.\hfill $\blacksquare$
\end{proof}

As $\l \| C\r\|$ is bounded and constant there must always exist $\eta > 0$ such that \eqref{eq:uselessineq} is satisfied.

\section{Affine Systems}

    Given a switched affine system of the form \eqref{eq:switchedaffine} with sub-systems $\l ( \mathcal{A}, \mathcal{B} \r ) = \l \{ \l ( A_1, b_1 \r ), \l ( A_2, b_2 \r ), \cdots, \l ( A_m, b_m \r ) \r \}$ we say the \emph{underlying} linear system is the switched linear system \eqref{eq:linsys} with sub-systems $\mathcal{A} = \l \{ A_1, \cdots, A_m \r \}$ under the same switching function $\sigma(t)$.

 Intuitively, we may separate an affine system into a `drift' arising from the addition of $b_i$ and a linear system $\dot x = A_{\sigma(t)} (x)$. Thus if the linear system becomes unbounded then the solution to the affine system remains bounded only if the `drift' counters the divergence of the linear system. This cannot occur, as should the drift ensure that the solution for initial condition $x(0)$ is bounded then it increases the rate of divergence for the solution when the initial condition is $-x(0)$. Hence, ICI stability of the underlying linear system is necessary for ICI stability of a switched affine system i.e. for the switched affine system to converge to an equilibrium point. We prove this in \Cref{thm:omgname} for a system where all sub-systems have a common equilibrium.

As noted by \cite{egid21}, in general, a switched affine system has multiple equilibria meaning it is not possible to ICI stabilise a switched affine system with a switching function which has non-zero average dwell time. However it is possible to \emph{practically ICI stabilise} the system which is equivalent to ICI stability of the underlying switched linear system. For such systems the ICI stabilisability is necessary and sufficient for the practical ICI stabilisability of \eqref{eq:switchedaffine}.

\begin{thm}
\label{thm:omgname}
    Suppose $\sigma$ is a non-vanishing switching signal which ICI stabilises \eqref{eq:switchedaffine}. Then each sub-system of \eqref{eq:switchedaffine} has a common equilibrium point.
\end{thm}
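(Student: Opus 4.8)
The plan is to argue directly from the convergence $x(t)\to 0$, using the non-vanishing hypothesis to isolate, in the limit, the pure affine drift of a single sub-system acting near the origin. First I would fix any sub-system $i$ that is activated infinitely often; since $\sigma$ is non-vanishing there is a $\tau>0$ so that each such activation occupies an interval of length at least $\tau$. Hence I can select a sequence of activation intervals $[s_k, s_k+\tau_k]$ with $s_k\to\infty$ and $\tau_k\geq\tau$ on which the dynamics are governed solely by $(A_i,b_i)$.

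On each such interval the solution is given explicitly by $x(s_k+\theta)=e^{A_i\theta}x(s_k)+\int_0^{\theta}e^{A_i(\theta-r)}b_i\,dr$ for $\theta\in[0,\tau]$ (valid since $\theta\leq\tau\leq\tau_k$). Because $\sigma$ ICI stabilises the system, the \emph{entire} trajectory satisfies $x(t)\to 0$, not merely its values at switching instants; hence $x(s_k)\to 0$ and, for every fixed $\theta\in[0,\tau]$, also $x(s_k+\theta)\to 0$ since $s_k+\theta\to\infty$. Passing to the limit $k\to\infty$, the term $e^{A_i\theta}x(s_k)\to 0$ (a fixed bounded matrix applied to a vanishing vector), which forces
\[
\int_0^{\theta} e^{A_i(\theta-r)}b_i\,dr=0 \quad\text{for all }\theta\in[0,\tau].
\]

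The final step is to read off $b_i=0$. Writing $\phi_i(\theta):=\int_0^{\theta}e^{A_i u}b_i\,du$ (equivalently, the solution of $\dot{y}=A_i y+b_i$ with $y(0)=0$), the identity above says $\phi_i\equiv 0$ on $[0,\tau]$; differentiating gives $\dot\phi_i(\theta)=e^{A_i\theta}b_i\equiv 0$, and evaluating at $\theta=0$ yields $b_i=0$. Then $A_i\cdot 0+b_i=0$, so the origin is an equilibrium of sub-system $i$. Applying this to every sub-system activated infinitely often, which under the non-vanishing hypothesis is precisely the set with positive activation fraction $\alpha_i>0$ (the remaining sub-systems being asymptotically irrelevant as noted earlier), shows that all relevant sub-systems share the common equilibrium $0$.

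I expect the main obstacle to be the careful exploitation of the non-vanishing hypothesis: it is exactly the fixed lower bound $\tau$ on the dwell time that provides a non-degenerate window on which a single sub-system's flow can be extracted in the limit, letting the drift $b_i$ be separated from the linear part and shown to vanish. Without it the windows would collapse as $k\to\infty$ and the argument would break down, consistent with the fact that high-frequency (vanishing dwell) switching can drive an affine system to a point by averaging, as in \cite{town20}. A secondary point needing care is justifying that convergence of the full trajectory, and not only at switching times, legitimises evaluating the limit at each intermediate $\theta$, together with the remark reconciling ``each sub-system'' with the sub-systems actually activated infinitely often.
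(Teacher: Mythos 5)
Your proof is correct and rests on the same underlying idea as the paper's --- the limit point of the trajectories must be fixed by the flow of every sub-system that keeps acting, hence is a common equilibrium --- but your execution is genuinely more careful and, in one important respect, more honest. The paper's proof is a two-line argument: it asserts that the limit $y$ satisfies $y = \Phi(t,\sigma)\, y$ for all $t$ and immediately concludes that $y$ is an equilibrium of each sub-system; notably, it never visibly invokes the non-vanishing hypothesis. Your version supplies exactly the missing mechanism: the dwell-time lower bound $\tau>0$ furnishes a non-degenerate window $[s_k,s_k+\tau]$, arbitrarily late, on which a single sub-system's affine flow can be written explicitly, and convergence of the whole trajectory then isolates the drift term $\int_0^{\theta}e^{A_i(\theta-r)}b_i\,dr$ and forces it to vanish. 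This is precisely where the hypothesis is needed (as you observe, with vanishing dwell time the windows collapse and averaging can stabilise the system to a point that is no sub-system's equilibrium), so your proof is arguably the correct rigorous form of the paper's sketch. Two small reconciliations with the paper: (i) you take Definition~1 literally and place the limit at the origin, concluding $b_i=0$; the paper's own examples (e.g.\ the common equilibrium $(0,-1)$ in Example~1) show the intended reading is convergence to a single point $y$, and your argument transfers verbatim --- the limit identity becomes $e^{A_i\theta}y+\int_0^{\theta}e^{A_i(\theta-r)}b_i\,dr=y$, and differentiating at $\theta=0$ gives $A_i y+b_i=0$. (ii) Your restriction to sub-systems activated infinitely often is a genuine caveat on the theorem as stated (nothing can be concluded about a sub-system that is eventually never activated), but it is one the paper's proof shares implicitly; note only that ``activated infinitely often'' is the right condition here, which is weaker than $\alpha_i>0$.
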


\begin{proof}
    Suppose $\sigma$ ICI stabilises \eqref{eq:switchedaffine} i.e. there exists a unique $y \in \R^n$ such that
    \[
        y := \lim_{t \to \infty} \Phi(t, \sigma) x(0)
    \]%
    for all initial conditions $x(0)$ where $\Phi(t, \sigma)$ is the state-transition matrix at $t$ for \eqref{eq:switchedaffine} under $\sigma$. This implies that $y = \Phi(t, \sigma) y$ for all $t$. Hence $y$ must be an equilibrium point for each sub-system.\hfill $\blacksquare$
\end{proof}

\begin{eg}
\label{eg:one}
We consider the two sub-system switched affine system with sub-system matrices
\begin{align}
\label{eq:ega}
	A_1  :=  \begin{pmatrix}
		-2.1 & -2 \\
		0.5 & 1
		\end{pmatrix}
		\text{ and }
	A_2 := \begin{pmatrix}
		1 & 2 \\
		0.1 & -2
		\end{pmatrix}
\end{align}
and affine components
		\[ b_1 := \begin{pmatrix}
				-2 & 
				1
			\end{pmatrix}'
			\text{ and }
			b_2 := \begin{pmatrix}
			 2  &
			 -2
			 \end{pmatrix}'
			 \]%
Each of the sub-systems share a common equilibrium point $e = -A_1^{-1} b_1 = -A_2 ^{-1} b_2  = (0 ,-1)$. The system is ICI stabilised by the switching function:
\begin{align}
\label{eq:signal}
\sigma(\eta, t) :=  \begin{cases}
			1, &\lfloor t/ \eta \rfloor \; mod(4) \leq 1 \\
			2, & \text{otherwise}
			\end{cases}%
\end{align}%
for all sufficiently small $\eta$. We note that under this switching function $\alpha_1 = \alpha_2 = 0.5$ i.e. both systems are sequentially activated for the same duration. 

\begin{figure}[H]
\begin{center}
\includegraphics[scale=0.16]{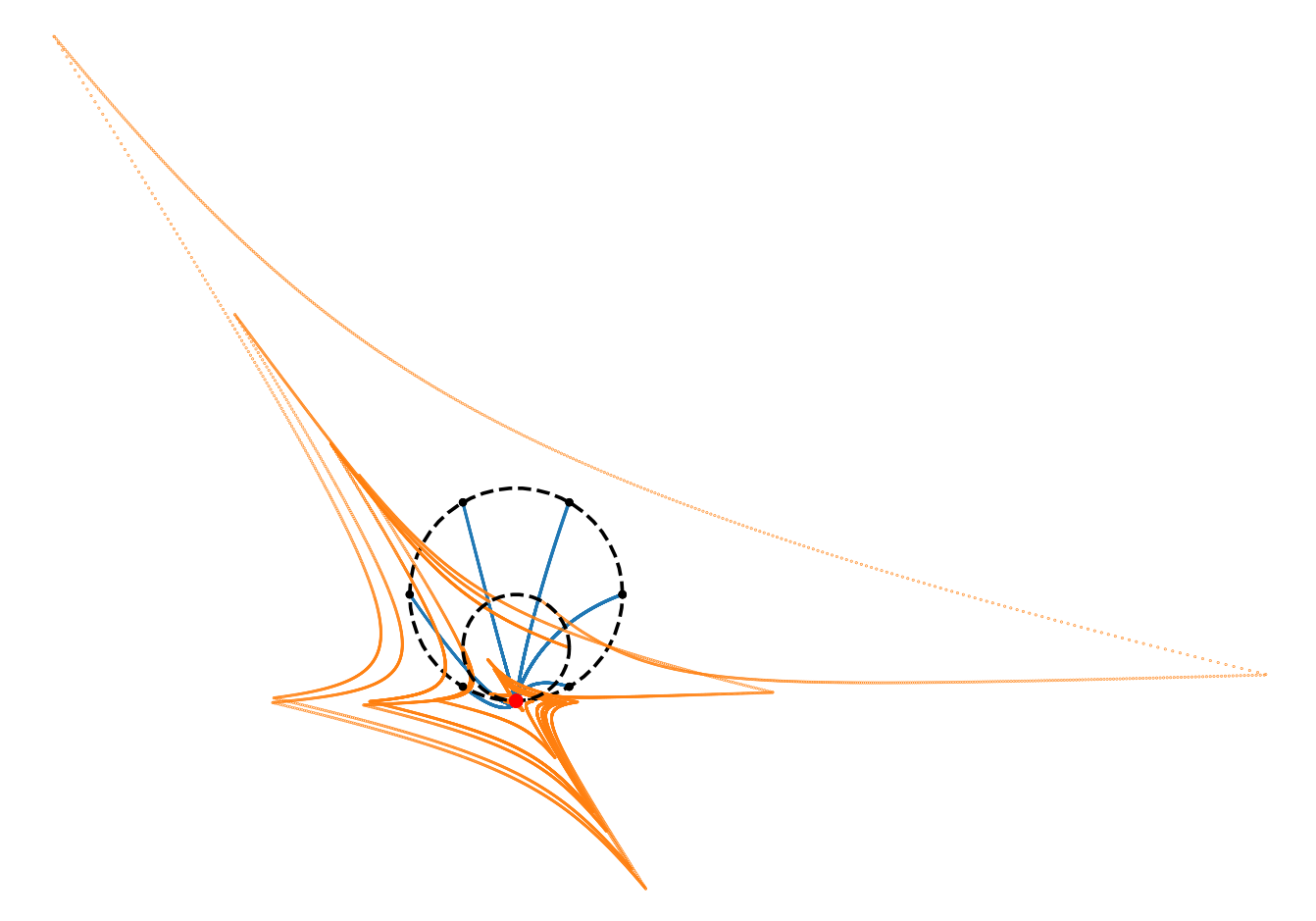}
\end{center}
\caption{The convergence of system \eqref{eq:switchedaffine} to the common equilibrium point $(0, -1)$ under $\sigma(\eta t)$ for $\eta = 1.1$ (orange) and $\eta = 10^{-3}$ (blue). The value of $\eta = 10^{-3}$ approximates the average system. The initial set was taken to be the unit ball centred at the origin $B(0)$. This has been scaled by $0.5$ when $\eta = 1.1$ for presentation purposes.}
\label{fig:putitthere}
\end{figure}%

The convergence of the system to the equilibrium $(0, -1)$ from the unit ball under $\sigma(\eta,t)$ is shown in \Cref{fig:putitthere}. The paths of convergence under $\sigma(1.1, t)$ for a few selected points on the boundary of the unit ball are shown by the orange traces in \Cref{fig:putitthere}. The paths of the solution to the average system which is approximated by the switching function $\sigma(10^{-3}, t)$, from the same initial points (marked on a scaled circle), are shown by the blue traces.
\end{eg}

\subsection{Practical Stability}

Whenever each sub-system in \eqref{eq:switchedaffine} does not share a common equilibrium point, if $\sigma$ is periodic and non-vanishing then it does not ICI stabilise \eqref{eq:switchedaffine}. However the system is \emph{ICI practically stabilisable} whenever \eqref{eq:linsys} is ICI stabilisable. 

\begin{defn}[Practical stability]
	The system \eqref{eq:switchedaffine} is ICI practically stabilisable if there exist $\sigma$ and bounded $P \subset \mathbb{R}^n$ such that $x(t) \to x \in P$ for all initial conditions $x(0)$ and $x(t) \in P$ for all $t \geq T$ if $x(T) \in P$.
\end{defn}

In \Cref{thm:nameit}, we use the average affine system 
\begin{align}
\label{eq:avaffine}
    \dot x = A x + b = \l ( \sum_i \alpha_i A_i \r)x + \sum_i \alpha_i b_i
\end{align}
which has an equilibrium at $x = -A^{-1} b$.

\begin{thm}
\label{thm:nameit}
    The following are equivalent
    \begin{enumerate}
        \item{The signal $\sigma$ ICI stabilises the system \eqref{eq:linsys}.}
        \item{The signal $\sigma(\eta, t)$ ICI stabilises the average affine system for all $\eta \leq 1$.}
        \item{There exists a non-vanishing periodic signal $\sigma(\eta, t)$ which  ICI stabilises the average affine system for all $\eta \leq 1$.}
        \item{The signal $\sigma$ ICI practically stabilises the system \eqref{eq:switchedaffine}.}
        \item{There is a non-vanishing periodic switching signal $\sigma(\eta, t)$ which ICI practically stabilises the system \eqref{eq:switchedaffine} for all $\eta \leq 1$.}
        \item{There is a periodic signal $\sigma$ which ICI stabilises the system \eqref{eq:linsys}.}
    \end{enumerate}
\end{thm}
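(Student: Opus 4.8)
The plan is to route every implication through a single hub: stability of the average matrix $A = \sum_i \alpha_i A_i$ determined by the activation fractions of $\sigma$, equivalently the ICI stabilisability of the underlying linear system \eqref{eq:linsys}. I would first group the six conditions into those concerning \eqref{eq:linsys} directly (namely (1) and (6)), those concerning the time-invariant average affine system \eqref{eq:avaffine} (namely (2) and (3)), and those concerning practical stabilisation of the true affine system \eqref{eq:switchedaffine} (namely (4) and (5)). The equivalence $(1) \iff (6)$ is then immediate from \Cref{thm:noname}, since that result already identifies ICI stabilisability of \eqref{eq:linsys} with the existence of a periodic ICI stabilising signal and with the existence of a stable $A \in co(\mathcal{A})$; this also fixes the matrix $A$ and the vector $b = \sum_i \alpha_i b_i$ appearing in \eqref{eq:avaffine}.

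To link the linear hub to the average affine conditions (2) and (3), I would apply the coordinate shift $z := x + A^{-1} b$, under which \eqref{eq:avaffine} becomes the homogeneous system $\dot z = A z$. Convergence of \eqref{eq:avaffine} to its equilibrium $-A^{-1}b$ from every initial condition is therefore exactly stability of $A$, i.e. ICI stabilisability of \eqref{eq:linsys}. Because a time-scaling $\sigma(\eta, t)$ leaves the activation fractions $\alpha_i$ -- and hence both $A$ and $b$ -- unchanged, this property is independent of $\eta$ and holds simultaneously for all $\eta \le 1$; this delivers $(1) \iff (2) \iff (3)$.

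The substantive content is the passage between the average affine system and the true affine system, i.e. $(2)/(3) \implies (5)$ together with the converse. For a periodic non-vanishing signal the one-period map of \eqref{eq:switchedaffine} is affine, $x \mapsto \Phi(\eta, T) x + v(\eta)$, where $\Phi(\eta, T)$ is precisely the linear period map analysed in \Cref{thm:noname} and $v(\eta)$ is the accumulated drift obtained by composing the particular solutions $\int_0^{\eta \alpha_i} e^{s A_i}\,ds\, b_i$ across one period. The estimates \eqref{eq:usethis}--\eqref{eq:inequp} give $\| \Phi(\eta, T) \| < 1$ for $\eta$ small enough, so the period map is a contraction; its unique fixed point $x^\star = (I - \Phi(\eta, T))^{-1} v(\eta)$ seeds a periodic orbit to which every sampled trajectory converges, and this bounded orbit is the set $P$ in the definition of practical stability. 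This yields $(3) \implies (5)$ and, on specialising the construction to $\sigma$ itself, $(2) \implies (4)$. Conversely, if $\sigma$ practically stabilises \eqref{eq:switchedaffine}, the homogeneous part $\Phi(\sigma, T)$ must be a contraction -- otherwise the linear dynamics would render trajectories unbounded, by the drift argument preceding \Cref{thm:omgname} -- so $\sigma$ ICI stabilises \eqref{eq:linsys}, giving $(4) \implies (1)$; the identical argument applied to the signal in (5) gives $(5) \implies (1)$, so the cycle closes through the hub.

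The step I expect to be the main obstacle is the practical-stability claim: showing that contraction of the linear period map forces the \emph{continuous} affine trajectory, not merely its values sampled at integer multiples of the period, into a bounded invariant set $P$, and that $P$ is genuinely forward invariant once entered. This requires a uniform bound on the intra-period drift and on the transient excursions within each dwell interval, so that the limit set is the periodic orbit through $x^\star$ rather than a single point -- which is consistent with \Cref{thm:omgname}, since in the absence of a common equilibrium the trajectory cannot collapse to a point. A secondary subtlety to handle carefully is the matching of quantifiers between the definite statements (1), (4) about the given $\sigma$ and the existential statements (3), (5), (6): these agree only once $\sigma$ is taken with a dwell time small enough for the BCH correction $o(\eta^2) C$ in \eqref{eq:usethis} to be dominated by the contraction of $e^{\eta A}$, which is exactly the regime $\eta \le M$ isolated in \Cref{thm:noname}.
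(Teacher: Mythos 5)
Your architecture matches the paper's in outline: both route the equivalences through stability of the average matrix $A=\sum_i\alpha_i A_i$, both obtain $(1)\iff(2)\iff(3)$ by the coordinate shift that turns \eqref{eq:avaffine} into $\dot z = Az$, and both prove the converse direction $(4)/(5)\implies(1)/(6)$ by the same $\pm x(0)$ idea (the paper makes this quantitative via the parallelogram identity applied to the general solution \eqref{eq:gensoln}, concluding that the homogeneous part $\prod_i e^{A_i\tau_i}$ cannot be unstable). Where you genuinely diverge is the central forward implication. The paper proves $(2),(3)\implies(4),(5)$ by an averaging/Euler-discretisation estimate: over one switching cycle $x(t_m)-x(t_0) = T\sum_i\alpha_i(A_i x(t_{i-1}) + b_i)+o(\tau^2)$, so for sufficiently small dwell time the switched trajectory $\varepsilon$-tracks the solution of the average affine system \eqref{eq:avaffine}, which converges to $-A^{-1}b$. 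You instead treat the one-period map $x\mapsto\Phi(\eta,T)x+v(\eta)$ as an affine contraction and take its fixed point $x^\star=(I-\Phi(\eta,T))^{-1}v(\eta)$ to seed the attracting periodic orbit. Your route is arguably sharper: it identifies the limit set as the periodic orbit through $x^\star$ (consistent with the limit cycle in \Cref{fig:putputitmore}) and avoids the error-accumulation-over-an-infinite-horizon issue that the paper's tracking estimate leaves implicit; the paper's route, in exchange, locates the limit set near the average equilibrium and degrades gracefully to the vanishing-dwell-time case.

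One concrete weak point in your version: $(2)\implies(4)$ cannot be obtained by ``specialising the construction to $\sigma$ itself'', because the $\sigma$ in (4) is neither assumed periodic nor assumed to have dwell time small enough for $\|\Phi(\eta,T)\|<1$, so the period-map contraction argument simply does not apply to it. (The paper's own proof of this step has an analogous quantifier problem --- its approximation only delivers practical stability for sufficiently small dwell time --- but if you adopt the contraction route you must either restrict to small-dwell-time periodic signals or give a separate argument for a general ICI stabilising $\sigma$, e.g. uniform decay of the homogeneous transition matrix plus boundedness of the forced response in \eqref{eq:gensoln}.) A second, smaller gap you share with the paper: in the converse direction, practical stability only yields boundedness of $Y_n=\prod_i e^{A_i\tau_i}$, i.e. ``not unstable'' rather than contraction; promoting this to ICI stabilisation of \eqref{eq:linsys} requires ruling out unit-modulus eigenvalues, which neither your sketch nor the paper's proof fully carries out.
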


\begin{proof}
\textbf{$(1 \implies (2 \text{ and } 3) )$}
\todo{to do -- follows by \Cref{thm:name}?}
    Suppose $\sigma$ ICI stabilises \eqref{eq:linsys}. This implies that $A := \sum_i \alpha_i A_i$ is stable. Hence, making the variable substitution $x = z - y$ where $y$ is the equilibrium point of the average system, we have that the solution $z(t)$ to $\dot z = A z$ converges to $0$ for all initial conditions $z(0)$ i.e. $x = y$. Thus the average affine system is ICI stabilised by $\sigma$. Furthermore, by \Cref{thm:noname} there exists a non-vanishing periodic signal $\sigma(\eta, t)$ which ICI stabilises \eqref{eq:linsys} for all $\eta \leq 1$ and hence, by the above argument ICI stabilises the average system for all $\eta \leq 1$.
    
\textbf{$(2 \text{ and } 3 \implies 4 \text{ and } 5)$}
    Let $(t_i)$ be the sequence of switching instances of the signal $\sigma(\eta, t)$. On the interval $(t_{i-1},t_i)$ whilst the $i^{\text{th}}$ sub-system is active we have for system \eqref{eq:switchedaffine}
    \[
        x(t) - x(t_{i-1}) = (t-t_{i-1}) \l( A_i x(t_{i-1}) + b_i \r ) + o(\tau_i^2)
    \]%
    where $\tau_i := t_{i} -t_{i-1}$ and $t\in (t_{i-1},t_i)$. Thus after one switching cycle with duration $T$ we have
    \[
        x(t_m) - x(t_0) =  \l ( \sum_{i=1} ^m \tau_i \l ( A_i x(t_{i-1}) + b_i \r ) \r ) + o(\tau ^2)
    \]%
    where  $\tau := \max \{ \tau_i\} \leq T$. Thus
    \begin{align*}
        \frac{x(t_m) - x(t_0)}{T} & = \l ( \sum_{i=1} ^m  \l ( \frac{\tau_i}{T} \r ) \l ( A_ix(t_{i-1}) + b_i \r ) \r) + o(\tau^2)  \\
        &= \l ( \sum_i \alpha_i \l ( A_ix(t_{i-1}) + b_i \r ) \r) + o(\tau ^2)
    \end{align*}%
    which converges to $\dot x = Ax + b$ as $\eta \to 0$ and therefore $T \to 0$. Hence for all $\varepsilon > 0$ there exists a sufficiently small maximum dwell-time $\tau$ so that
    \[
        \l \| \hat x(t) - x(t) \r \| < \varepsilon
    \]%
    for all $t$ where $\hat x(0) = x(0)$ and $\hat x$ is the solution under the average system and $x$ is the solution of \eqref{eq:switchedaffine} under the non-vanishing periodic signal $\sigma(\eta, t)$.
    \todo{to do -- might be better to show b'dedness of gen. soln?}

\textbf{$(5 \implies 6)$}
    Suppose there exists $\sigma(\eta, t)$ which ICI practically stabilises \eqref{eq:switchedaffine}. After $n$ switching instances the solution of \eqref{eq:switchedaffine} is
     \begin{align}
     \label{eq:gensoln}
     x(t_n) &= \l ( \prod_{i=1} ^n e^{A_i \tau_i } \r ) x(0) + \sum_{i = 1} ^n \l ( \prod_{j = i} ^n e^{A_j \tau_j } \r ) I_i
 	\end{align}
    where $\tau_i := t_i - t_{i-1}$ for all $i > 0$, $\tau_j$ is defined similarly,  $I_i=\int_{t_{i-1}}^{t_i} e^{-A_i \tau} b_i d\tau $ and we note that if $n > m$ then the $A_i$ repeat after $A_m$ e.g. $A_{m+1} = A_1$ and $A_{m+2} = A_2$. Hence, there exists $M$ such that 
    
    \begin{align}
    \label{eq:abound}
        \l \| \prod_{i = 1} ^{n} e^{A_i \tau_i } x(0) + \sum_{i = 1} ^{n} \l ( \prod_{j=i} ^{n} e^{A_j \tau_j}\r ) I_i  \r \| < M
    \end{align}%
    for all $n$ and for all $x(0)$. Thus in particular \eqref{eq:abound} holds for $x(0)$ and $-x(0)$. Thus
    \begin{align}
    \label{eq:abound1}
        \l \|- \prod_{i = 1} ^n e^{A_i \tau_i } x(0) + \sum_{i = 1} ^n \l ( \prod_{j=i} ^n e^{A_j \tau_j}\r ) I_i  \r \| < M
    \end{align}%
   Making the variable substitutions 
   \[
   Y := \prod_{i = 1} ^n e^{A_i \tau_i} x(0) \text{ and } X := \sum_{i = 1} ^n \l ( \prod_{j=i} ^n e^{A_j \tau_j}\r ) I_i
   \]
   by the parallelogram identity we have
   \begin{align}
   \label{eq:pty}
        2 \l \| X \r \|^2 + 2 \l \| Y \r \|^2 = \l \| X -Y \r \|^2 + \l \| X + Y \r \|^2 < 2M
   \end{align}%
   which holds if and only if $\l \| X \r \| \leq M$ and $\l \| Y \r \| \leq M$. This implies that 
       $Y_n := \prod_{i = 1} ^n e^{A_i \tau_i}$ 
   must be not unstable for all sufficiently large $n$. However, if $Y_n$ is not `strictly' stable, i.e. there exists an eigenvalue $\lambda_i$ of $Y_n$ such that $\|\lambda_i  \| = 1$, for all $n$ then there exists $M'$ such that the ball $B_{M'} (0)$  satisfies $\l \| Y_n B_{M'} (0) \r \| > M$ for all $n$ which contradicts \eqref{eq:pty}.
   We note that $(6)$ implies $(1)$ by \Cref{thm:noname}.\hfill $\blacksquare$
\end{proof}

\begin{cor}
Suppose $\sigma$ ICI stabilises \eqref{eq:linsys}. Then $\sigma$ ICI stabilises \eqref{eq:switchedaffine} only if either each sub-system has a common equilibrium point or the signal $\sigma$ is vanishing.
\end{cor}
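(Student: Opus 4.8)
The plan is to reduce the statement to \Cref{thm:omgname} by a dichotomy on the switching signal. Observe that, by the definition of \emph{non-vanishing}, a signal $\sigma$ is either non-vanishing -- in which case there is a uniform $\tau > 0$ bounding every dwell time below -- or else no such positive lower bound exists, which is exactly the \emph{vanishing} condition. These two cases are mutually exclusive and, since vanishing is precisely the logical negation of the quantified existence statement defining non-vanishing, they are exhaustive. It therefore suffices to treat each case in turn.

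The vanishing case is immediate: if $\sigma$ is vanishing then the second disjunct of the conclusion already holds and nothing further is required. All the content lies in the non-vanishing case, where I would invoke \Cref{thm:omgname} directly. Assuming $\sigma$ ICI stabilises \eqref{eq:switchedaffine} and that $\sigma$ is non-vanishing, \Cref{thm:omgname} yields that every sub-system of \eqref{eq:switchedaffine} shares a common equilibrium point -- precisely the first disjunct. Combining the two exhaustive cases gives the claimed implication that ICI stabilisation of \eqref{eq:switchedaffine} forces either a common equilibrium or a vanishing signal.

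The role of the standing hypothesis that $\sigma$ ICI stabilises the underlying linear system \eqref{eq:linsys} is contextual rather than logically load-bearing for the reduction above: it places us in the regime discussed before \Cref{thm:omgname}, where convergence to a single point is at all feasible. Were the linear part unstable along some mode, the affine drift $b_i$ could at best keep one trajectory bounded while accelerating the divergence of the reflected initial condition $-x(0)$, so no single equilibrium could attract every initial condition. This hypothesis is thus what makes the common-equilibrium alternative meaningful rather than vacuous. I do not anticipate a genuine obstacle, since the corollary is essentially the contrapositive repackaging of \Cref{thm:omgname}; the only point demanding care is confirming that the vanishing/non-vanishing split is exhaustive, which follows immediately from the quantifier structure of the non-vanishing definition.
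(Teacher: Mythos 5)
Your proof is correct for the statement as written, and it is worth noting that it does \emph{not} follow the paper's own route --- in fact the two arguments prove different directions of the implication. The corollary asserts that ICI stabilisation of \eqref{eq:switchedaffine} holds \emph{only if} there is a common equilibrium or $\sigma$ is vanishing, i.e.\ the necessity of the disjunction; your case split (vanishing: trivially done; non-vanishing: invoke \Cref{thm:omgname}) establishes exactly that, and your observation that the standing hypothesis on \eqref{eq:linsys} is not load-bearing for this direction is accurate, since \Cref{thm:omgname} does not use it. The paper's proof, by contrast, argues the converse (sufficiency): it assumes a common equilibrium $y$, substitutes $x = z - y$ to reduce each sub-system to $\dot z = A_i z$ and concludes convergence to $y$ from ICI stability of the underlying linear system; and in the vanishing case it appeals to the approximation argument of \Cref{thm:nameit} to show convergence to the average system's equilibrium $-A^{-1}b$. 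There the hypothesis on \eqref{eq:linsys} is essential. So your argument is the one that actually matches the ``only if'' phrasing, while the paper's supplies the complementary ``if'' direction; together they would yield an equivalence. The one point to keep an eye on in your write-up is the vanishing/non-vanishing dichotomy: the paper never formally defines ``vanishing'', so your reading of it as the logical negation of the non-vanishing definition (no uniform positive lower bound on dwell times) is the only one under which your case split is exhaustive, and you should state that convention explicitly rather than rely on the informal ``$\tau \to 0$'' gloss given earlier in the paper, which is strictly stronger than the negation.
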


\begin{proof}
    Suppose $\sigma$ ICI stabilises \eqref{eq:linsys} and that $y$ is a common equilibrium point for each sub-system of \eqref{eq:switchedaffine}. Making the variable substitution $x = z - y$ we have that each sub-system of \eqref{eq:switchedaffine} is now of the form
        $\dot z = A_i z$. 
    Thus after $N$ switching instances the solution of \eqref{eq:switchedaffine} is
    \begin{align}
    \label{eq:fornotation}
        z(t_N) = \prod_{i = 1} ^N e^{A_i (t_i - t_{i-1})} z(0)
    \end{align}
    which converges to $0$ for all initial conditions $z(0)$ as $\sigma$ ICI stabilises \eqref{eq:linsys}. Hence, $\lim_{N \to \infty} x(t_N) = y$. We emphasise that the index $i$ in \eqref{eq:fornotation} refers to the switching instant and not the index of the matrix i.e. $A_i \in \mathcal{A}$ for all $i$ even if $i > m$. Now, suppose the system is in equilibrium and the $i^{\text{th}}$ sub-system is active. This implies:
	$0 = A_i y + b_i$ i.e. $A_i y = - b_i$. 
	At the next switching instant \eqref{eq:switchedaffine} becomes
	$\dot y = A_{k} y + b_{k}$ 
	which is in equilibrium only if $A_{k} y = -b_{k}$.  Hence \eqref{eq:switchedaffine} remains in equilibrium if and only if $A_k y = -b_k$ for all $k$ i.e. $y$ is an equilibrium point of each sub-system. So once the state of \eqref{eq:switchedaffine} equals $y$ it remains there independent of the switching signal.

    Suppose instead that $\sigma$ is vanishing. By the proof of \Cref{thm:nameit}, we have that the system arbitrarily approaches the average affine system \eqref{eq:avaffine} i.e. for all $\varepsilon > 0$ there exists $T$ such that
        $\l \| \hat x (t) - x(t)\r \| < \varepsilon$ 
    for all $t \geq T$ and all initial conditions $x(0)$, where $\hat x(t)$ is the solution \eqref{eq:avaffine} and $x(t)$ is the solution to \eqref{eq:switchedaffine} under $\sigma$. As $A$ is stable because $\sigma$ ICI stablises \eqref{eq:linsys} we have that $\hat x (t)$, and therefore $x(t)$, approaches the equilibrium point $-A^{-1} b$.\hfill $\blacksquare$
\end{proof}

These results bound the region of attainable equilibria for an affine switched system in terms of the switching frequency and the dynamics of the average system. This relates to the design of switching functions to attain specific equilibria as studied by \cite{egid21}.

\begin{eg}

In \Cref{fig:putputitmore} we consider the affine switched system with the same sub-system matrices, \eqref{eq:ega}, and switching signal, \eqref{eq:signal}, as in \Cref{eg:one}. However the affine components are
        $b_1 := \begin{pmatrix}
				-2 & 
				1
			\end{pmatrix}'
			\text{ and }
			b_2 := \begin{pmatrix}
			 2 & 
			 2
			 \end{pmatrix}'$.
\begin{figure}[H]
\begin{center}
\includegraphics[scale=0.16]{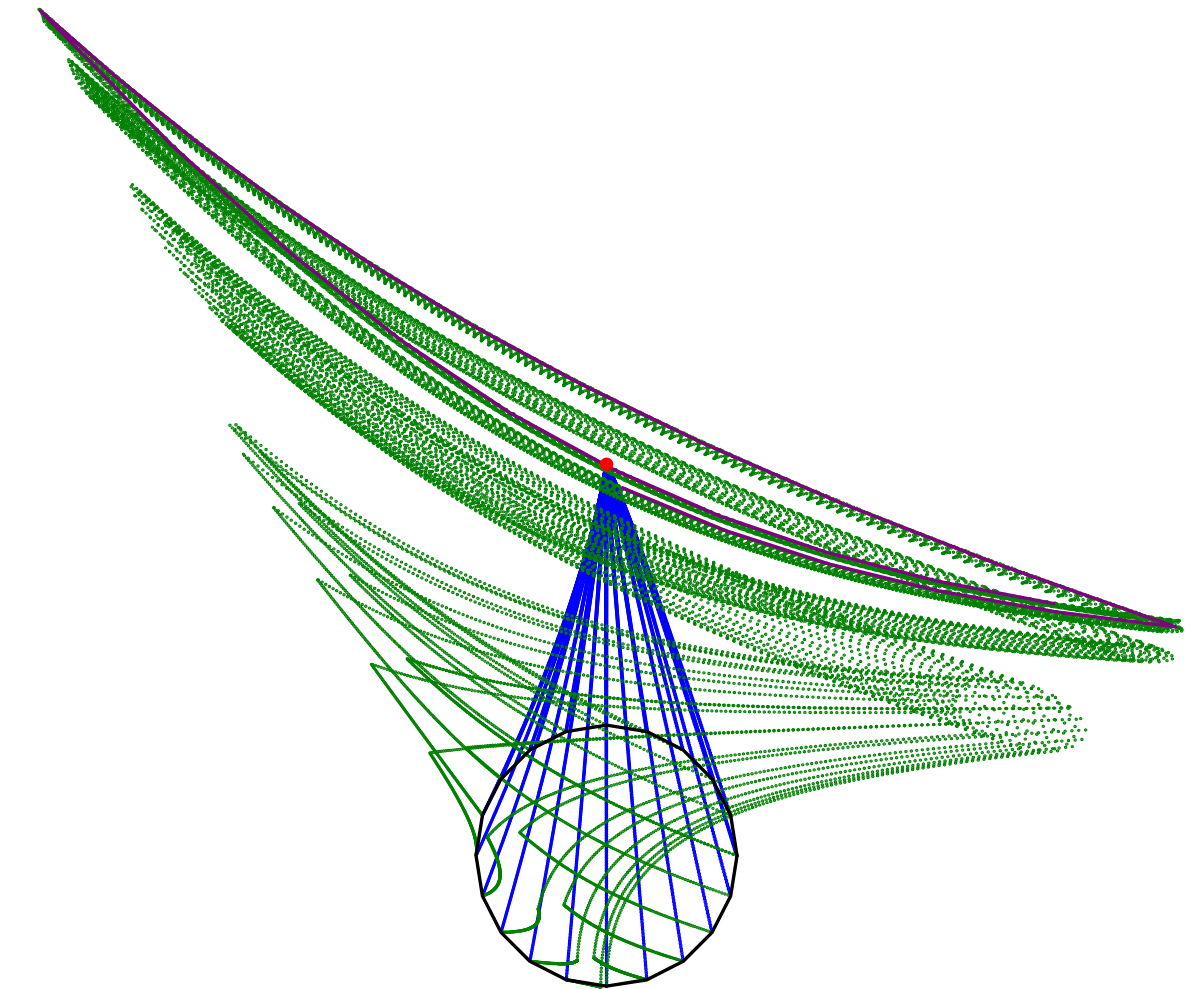}
\end{center}
\caption{The convergence of \eqref{eq:switchedaffine} under $\sigma(0.5, t)$ (green) and of the average system (blue). The average system is stable converging to the equilibrium $(0,3)$ (red). For all sufficiently small $\eta > 0$ the system converges to a closed orbit (purple). The initial set was $B(0)$.}
\label{fig:putputitmore}
\end{figure}
Hence the sub-systems do not share a common equilibrium point. The equilibria for each sub-system are $e_1 = -A_1 ^{-1} b_1=(0,-1)$ and $e_2 = - A_2 ^{-1} b_2 = (-3.64, 0.82)$. Thus the average system -- which corresponds to the system under the switching signal $\lim_{\eta \to 0} \sigma(\eta, t)$ where $\sigma(\eta, t)$ is defined in \eqref{eq:signal} -- has an equilibrium at $(0,3)$. This is shown by the red point in \Cref{fig:putputitmore}. The solutions of the switched affine system under the signal $\sigma(0.5, t)$ from a number of initial conditions on the boundary of the unit ball are shown by the green trajectories. The solutions of the average affine system from the same initial conditions are shown by the blue trajectories. Lastly the purple closed orbit represents the limit cycle of the system under $\sigma(0.5, t)$.
\end{eg}

\section{Conclusions}
Using periodic systems, we have proven that there exist switching functions which practically ICI stabilise switched affine systems with bounded switching frequency whenever the system is practically ICI stabilisable. We have also proven that, if the switching frequency is unbounded or each sub-system of a switched affine system shares a common equilibrium -- linear systems are a special case when this common equilibrium is $0$ -- then the system is ICI stabilisable to the equilibrium of the average affine system.

\bibliography{references} 
\end{document}